\newcommand{\conv}{\mathrm{conv}\ }
\newcommand{\epsi}{\varepsilon}
\newcommand{\norm}[1]{\|#1\|}
\newcommand{\N}{\mathbf{N}}
\newcommand{\abs}[1]{\left|#1\right|}
\newtheorem{theorem}{Theorem}
\newtheorem{mylemma}[theorem]{Lemma}
\newtheorem*{thm}{Theorem}
\begin{document}

\title{Balancing unit vectors}


\author{Konrad J. Swanepoel}
\address{Department of Mathematics and Applied Mathematics, 
University of Pretoria, Pretoria 0002, South Africa}

\email{konrad@math.up.ac.za}

\begin{abstract}
 \mbox{}

\textbf{Theorem A.} Let $x_1,\dots,x_{2k+1}$ be unit vectors in a 
normed plane.
Then there exist signs $\epsi_1,\dots,\epsi_{2k+1}\in\{\pm 1\}$ such that
$\norm{\sum_{i=1}^{2k+1}\epsi_i x_i}\leq 1$.

We use the method of proof of the above theorem to show the following point 
facility location result, generalizing Proposition 6.4 of Y. S. Kupitz and 
H. Martini (1997).

\textbf{Theorem B.} Let $p_0,p_1,\dots,p_n$ be distinct points in a normed 
plane such that for any $1\leq i<j\leq n$ the closed angle $\angle p_ip_0p_j$ 
contains a ray opposite some $\overrightarrow{p_0p_k}, 1\leq k\leq n$.
Then $p_0$ is a Fermat-Toricelli point of $\{p_0,p_1,\dots,p_n\}$, i.e.\
$x=p_0$ minimizes $\sum_{i=0}^n\norm{x-p_i}$.

We also prove the following dynamic version of Theorem A.

\textbf{Theorem C.} Let $x_1,x_2,\dots$ be a sequence of unit vectors in a 
normed plane.
Then there exist signs $\epsi_1,\epsi_2,\dots\in\{\pm 1\}$ such that
$\norm{\sum_{i=1}^{2k}\epsi_i x_i}\leq 2$ for all $k\in\N$.

Finally we discuss a variation of a two-player balancing game of J. Spencer 
(1977) related to Theorem C.
\end{abstract}

\maketitle

\section{Introduction}
In this note we consider balancing results for unit vectors related to work 
of B\'ar\'any and Grinberg \cite{BG}, Spencer \cite{Sp1} and Peng and Yan 
\cite{PY}.
We apply these results to generalize a point facility location result from 
the Euclidean plane \cite{KM} to general normed planes.
Finally we consider a dynamical balancing problem for unit vectors in the 
form of a two-player perfect information game.
Our results will mainly be in a normed plane $X$ with norm $\norm{\cdot}$ 
(except in Theorem~\ref{gametheorem}, where higher-dimensional normed spaces 
are also considered).

\subsection{Balancing Unit Vectors}
B\'ar\'any and Grinberg \cite{BG} proved the following:

\begin{theorem}[\cite{BG}]\label{bgone}
Let $x_1,x_2,\dots,x_n$ be a sequence of vectors of norm $\leq 1$ in a 
$d$-dimensional normed space.
Then there exist signs $\epsi_1,\epsi_2,\dots,\epsi_n\in\{\pm 1\}$ such that 
\[\norm{\sum_{i=1}^n\epsi_ix_i}\leq d.\]
\end{theorem}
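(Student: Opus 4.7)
The plan is to prove Theorem~\ref{bgone} by a linear-algebraic rounding argument. I introduce the linear map $\phi\colon\mathbb{R}^n\to X$ defined by $\phi(\alpha_1,\dots,\alpha_n) = \sum_{i=1}^n\alpha_i x_i$, and view the $2^n$ sign sequences as vertices of the cube $[-1,1]^n$. Call a coordinate \emph{active} at a point $\alpha\in[-1,1]^n$ if $\alpha_i\in(-1,1)$. The goal is to produce a vertex $\epsi\in\{\pm 1\}^n$ with $\norm{\phi(\epsi)}\le d$.

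First, starting from $\alpha^{(0)}=0$ (so $\phi(\alpha^{(0)})=0$), I iteratively modify $\alpha$ to reduce the number of active coordinates while preserving $\phi(\alpha)=0$. If at some stage there are $k>d$ active coordinates, then $\phi$ restricted to the $k$-dimensional coordinate subspace spanned by the active coordinate directions has kernel of dimension at least $k-d\ge 1$ by rank-nullity; choosing a nonzero vector $v$ in this kernel and moving $\alpha$ along $\pm v$ keeps $\phi(\alpha)$ fixed and leaves all inactive coordinates untouched, while driving at least one active coordinate to $\pm 1$ after a finite translation. After at most $n-d$ such iterations I reach some $\alpha^{*}\in[-1,1]^n$ with $\phi(\alpha^{*})=0$ and at most $d$ active coordinates.

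For the rounding step, I set $\epsi_i=\mathrm{sign}(\alpha_i^{*})$ at each active coordinate (arbitrary sign if $\alpha_i^{*}=0$); the inactive coordinates already lie in $\{\pm 1\}$. Then $\abs{\epsi_i-\alpha_i^{*}}\le 1$ for active $i$ and is zero otherwise, so by the triangle inequality
\[ \norm{\phi(\epsi)} = \Bigl\|\sum_{i\text{ active}}(\epsi_i-\alpha_i^{*})\, x_i\Bigr\| \le \sum_{i\text{ active}} \abs{\epsi_i-\alpha_i^{*}}\norm{x_i} \le d. \]

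The main subtlety lies in the iteration rather than the rounding: one must check that moving along the chosen $v$ really does hit a face of $[-1,1]^n$ in finite time and strictly decreases the active count. This is automatic because $v$ is supported on active coordinates whose values lie in the open interval $(-1,1)$, so the largest admissible step before some active coordinate attains $\pm 1$ is positive and finite, at which moment exactly one previously active coordinate becomes inactive. The choice of rounding that matches the sign of $\alpha_i^{*}$ is what keeps each individual discrepancy bounded by $1$ rather than by $2$, and is essential to obtain the constant $d$ rather than $2d$.
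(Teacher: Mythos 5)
Your proof is correct, and there is nothing in the paper to compare it against: Theorem~\ref{bgone} is stated with a citation to \cite{BG} and no proof is given in this paper. Your floating-variable argument --- maintain $\sum_i\alpha_ix_i=0$ with $\alpha\in[-1,1]^n$, use rank-nullity to move inside the kernel whenever more than $d$ coordinates are strictly interior, freeze coordinates as they hit $\pm1$, then round the at most $d$ surviving interior coordinates to their signs at a cost of at most $1$ each --- is in substance the original argument of B\'ar\'any and Grinberg, so you have reconstructed the standard proof rather than found a new route. Two trivial remarks: the step along $v$ may drive \emph{several} active coordinates to $\pm1$ simultaneously, not ``exactly one,'' which only helps; and rank-nullity requires fixing a linear identification of $X$ with a $d$-dimensional coordinate space, which is harmless since the statement is about an arbitrary $d$-dimensional normed space. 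Note also that your rounding bound $\abs{\epsi_i-\alpha_i^{*}}\le 1$ is exactly where sign-matching is used, as you say; rounding against the sign would only give $2$ per coordinate and the weaker bound $2d$.
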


We sharpen this theorem for an odd number of \emph{unit} vectors in a normed 
plane as follows.

\begin{theorem}\label{oddunitvectors}
Let $x_1,\dots,x_{2k+1}$ be unit vectors in a normed plane.
Then there exist signs $\epsi_1,\dots,\epsi_{2k+1}\in\{\pm 1\}$ such that
\[\norm{\sum_{i=1}^{2k+1}\epsi_i x_i}\leq 1.\]
\end{theorem}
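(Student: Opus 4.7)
The plan is to proceed by induction on $k$. The base case $k=0$ is immediate: $\norm{x_1}=1$, so the choice $\epsi_1=+1$ works. For the inductive step, I would aim to reduce $2k+1$ unit vectors to $2k-1$ unit vectors, at which point the induction hypothesis applies directly.

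The cleanest form of reduction to try first is: find two indices $i \neq j$ and a sign $\sigma \in \{\pm 1\}$ such that $y := x_i + \sigma x_j$ is itself a unit vector. Given such a pair, one replaces $\{x_i, x_j\}$ with $y$ to obtain $2k-1$ unit vectors, applies the induction hypothesis to produce signs $\eta_\ell$ with $\norm{\sum_\ell \eta_\ell z_\ell} \leq 1$, and lifts back by setting $\epsi_i = \eta_y$ and $\epsi_j = \eta_y\sigma$, where $\eta_y$ denotes the sign attached to $y$; the remaining signs are inherited unchanged, the signed sums agree exactly, and the bound is preserved.

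To produce such a pair, I would begin by replacing each $x_i$ by $-x_i$ if necessary so that all of the $x_i$ lie in a closed half-plane (any linear functional $f$ with $f(x_i) \neq 0$ for all $i$ accomplishes this after the obvious sign flips). Parametrize the corresponding semicircle of the unit circle $C$ continuously. Since for fixed unit $u$ the map $v \mapsto \norm{u+v}$ on $C$ varies continuously from $0$ at $v=-u$ to $2$ at $v=u$, the intermediate value theorem produces $v \in C$ with $\norm{u+v} = 1$; the hope is then that among the actual $2k+1$ vectors some pair realizes such a good combination.

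The main obstacle is that this intermediate value argument concerns a continuously moving vector, while the $x_i$'s are fixed; in polyhedral normed planes the two-vector reduction can simply fail. For instance, taking the three unit vectors $(1,1), (1,-1), (-1,1)$ in $\ell_\infty^2$, all six pairwise $\pm$-combinations have norm $0$ or $2$, never $1$. In such degenerate cases the remedy is to combine \emph{three} of the $x_i$'s with signs into a new unit vector, reducing $2k+1$ to $2k-1$ in one step---in the example above, $(1,1)+(1,-1)+(-1,1) = (1,1)$ has unit $\ell_\infty$-norm and plays this role. Establishing that at least one of the two-vector or three-vector reductions is always available, via a careful case analysis based on the geometry of the unit ball, is the technical crux of the argument.
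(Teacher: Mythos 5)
You identify the crux yourself, but it is worse than an unproven step: the claimed dichotomy is false, so the induction cannot be repaired along these lines. The reduction requires the combined vector to have norm \emph{exactly} $1$ (if it has norm $<1$ the inductive hypothesis no longer applies, and the bound $1$ genuinely fails for vectors of norm $\leq 1$: the Euclidean vectors $(1,0),(0,1),(0,\epsi)$ with $0<\epsi<1$ have every signed sum of norm $\sqrt{1+(1-\epsi)^2}>1$). Exact unit norm is a codimension-one condition, and generic configurations avoid it. Concretely, take the five Euclidean unit vectors at angles $0^\circ,10^\circ,20^\circ,30^\circ,40^\circ$. A pair satisfies $\norm{x_i\pm x_j}=1$ only when the angle between $x_i$ and $x_j$ is $60^\circ$ or $120^\circ$, which never occurs here. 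For triples,
\[\norm{\epsi_ix_i+\epsi_jx_j+\epsi_lx_l}^2=3+2\left(\epsi_i\epsi_j\cos\theta_{ij}+\epsi_i\epsi_l\cos\theta_{il}+\epsi_j\epsi_l\cos\theta_{jl}\right),\]
and a direct check over all ten triples and all sign classes shows the bracket is never exactly $-1$ when every pairwise angle lies in $\{10^\circ,20^\circ,30^\circ,40^\circ\}$ (the values cluster near $-0.65$ to $-1.12$ but never hit $-1$). So neither a two-vector nor a three-vector reduction exists. The same failure occurs already at the step $k=1\to k=0$: for the three vectors at $0^\circ,10^\circ,20^\circ$ the best signed sum has norm $\approx 0.97$, strictly below $1$, and no signed pair or triple is a unit vector. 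This is the structural reason the approach fails: the theorem's inequality is typically strict, while your reduction demands an exact equality at every step.

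For comparison, the paper's proof is global rather than inductive, and it is the alternation of signs, not a reduction in the number of vectors, that does the work. One flips signs $\delta_i$ so that all $\delta_ix_i$ lie in a fixed open half-plane (your first normalization step is exactly right), renumbers them in angular order, and takes $\epsi_i=(-1)^i\delta_i$. The identity, valid for odd $n$ with $x_{n+1}=-x_1$,
\[\sum_{i=1}^n(-1)^ix_i=\frac{1}{2}\sum_{i=1}^n(-1)^{i+1}(x_{i+1}-x_i),\]
exhibits the alternating sum as a point of the zonogon decomposition of the centrally symmetric polygon $P=\conv\{\pm x_i\}$, which is inscribed in the unit ball; hence the alternating sum has norm at most $1$. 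Note how oddness of $n$ enters through this identity (the lemma fails for even $n$), whereas in your scheme oddness only entered through the bookkeeping of the induction.
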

This result is best possible in any norm, as is seen by letting 
$x_1=x_2=\dots=x_{2k+1}$ be any unit vector.
The proof of this theorem is in Section~\ref{zonosection}.
The method of proof can also be used to generalize a result on 
Fermat-Toricelli points from the Euclidean plane to an arbitrary normed plane 
(Section~\ref{ftsection}).

B\'ar\'any and Grinberg also proved the following dynamic balancing theorem.

\begin{theorem}[\cite{BG}]\label{bgtwo}
Let $x_1,x_2,\dots$ be a sequence of vectors of norm $\leq 1$ in a normed 
space.
Then there exist signs $\epsi_1,\epsi_2,\dots\in\{\pm 1\}$ such that for all 
$k\in\N$,
\[\norm{\sum_{i=1}^k\epsi_ix_i}\leq 2d.\]
\end{theorem}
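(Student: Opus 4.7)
The strategy is to reduce the infinite statement to a finite one via compactness and then invoke Theorem~\ref{bgone} as the main tool. Equip the sign space $\{-1,+1\}^{\N}$ with the product topology, under which it is compact, and observe that for every $k \in \N$ the set of sign sequences satisfying $\norm{\sum_{i=1}^k \epsi_i x_i} \leq 2d$ is closed. By the finite intersection property of closed subsets of a compact space, it therefore suffices to show that for every $N$ there exists a finite sign sequence $\epsi_1, \dots, \epsi_N$ with $\norm{\sum_{i=1}^k \epsi_i x_i} \leq 2d$ for all $1 \leq k \leq N$.

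For the finite statement I would argue by induction on $N$, carrying the stronger invariant that the signs $\epsi_1, \dots, \epsi_N$ satisfy both $\norm{s_k} \leq 2d$ for every $k \leq N$ and the terminal bound $\norm{s_N} \leq d$. The terminal bound is exactly what Theorem~\ref{bgone} supplies, and it provides enough slack for the induction: since $\norm{s_N} \leq d$ and $\norm{x_{N+1}} \leq 1$, either sign for $\epsi_{N+1}$ yields $\norm{s_{N+1}} \leq d + 1 \leq 2d$, so the intermediate partial-sum bound is preserved automatically when we extend.

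The main obstacle is restoring the terminal invariant $\norm{s_{N+1}} \leq d$ after the extension, because after appending one more vector the terminal norm may rise to as much as $d+1$. Correcting this requires flipping one or more of the earlier signs $\epsi_j$, and each such flip perturbs every partial sum $s_k$ with $k \geq j$ by $\pm 2x_j$, which could in principle break the bound $\norm{s_k} \leq 2d$ at an intermediate index. I would handle this by applying Theorem~\ref{bgone} to the enlarged collection $x_1, \dots, x_{N+1}$ to obtain a target sign assignment $\delta$ with $\norm{\sum_{i=1}^{N+1} \delta_i x_i} \leq d$, and then interpolating between the current assignment and $\delta$ along a sequence of single-coordinate flips. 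The crux of the proof is then to show, by a combinatorial analysis on this path of flips or by a Steinitz-style reordering consequence of Theorem~\ref{bgone}, that some sign assignment along the path simultaneously satisfies the partial-sum bound $2d$ and the terminal bound $d$; this bookkeeping is where I expect the genuine work of the argument to concentrate.
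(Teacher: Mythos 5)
Your compactness reduction is fine (each constraint set is clopen and depends on finitely many coordinates, so nested nonempty finite-prefix solutions give an infinite one), and your strengthened finite invariant (all partial sums $\leq 2d$, terminal sum $\leq d$) is in fact a true statement. But the proposal has a genuine gap exactly where you place ``the genuine work'': the repair step is the whole theorem, and the mechanism you suggest for it fails. Theorem~\ref{bgone} applied to $x_1,\dots,x_{N+1}$ controls only the \emph{full} sum of the target assignment $\delta$; it says nothing about the partial sums of $\delta$, which can be of order $N$ --- already for $d=1$ with all $x_i$ equal, a valid $\delta$ is a block of $+$'s followed by a block of $-$'s, with partial sums as large as $N/2$. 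So the endpoint of your interpolation path typically violates the very invariant you are trying to maintain, and nothing forces any intermediate assignment on a flip path to satisfy it either: each flip of $\epsi_j$ shifts every $s_k$ with $k\geq j$ by $2x_j$, and you give no control over the accumulation of these shifts. The ``Steinitz-style reordering'' escape is also unavailable, since the prefix structure fixes the order of the vectors.

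The known proof (note the paper cites \cite{BG} for this theorem and does not reprove it) avoids revising past decisions altogether, which is the idea your sketch is missing. One maintains fractional coefficients $\alpha_i\in[-1,1]$ with $\sum_{i\leq n}\alpha_i x_i=0$ and at most $d$ coefficients not yet equal to $\pm 1$: when a new vector arrives, give it coefficient $0$; whenever $d+1$ coordinates are fractional, the corresponding vectors admit a linear dependency $\sum_i c_i x_i=0$, and moving $\alpha_i\mapsto\alpha_i+tc_i$ until some coefficient hits $\pm 1$ preserves the zero sum; a coefficient reaching $\pm 1$ is frozen forever. Taking $\epsi_i$ to be the frozen value (arbitrary for indices that never freeze), one gets for every $n$ that $\norm{\sum_{i\leq n}\epsi_i x_i}=\norm{\sum_{i\leq n}(\epsi_i-\alpha_i^{(n)})x_i}\leq 2d$, since only the at most $d$ coordinates still fractional at time $n$ contribute, each by at most $2$. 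The slack thus comes from \emph{deferring} at most $d$ decisions, not from flipping earlier signs; if you want to salvage your induction, this buffer of undecided coordinates is the invariant to carry in place of the terminal bound $\norm{s_N}\leq d$, and it also makes your compactness step unnecessary, since decided signs never change.
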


Again, for unit vectors in a normed plane we sharpen this result as follows.

\begin{theorem}\label{thmtwo}
Let $x_1,x_2,\dots$ be a sequence of unit vectors in a normed plane.
Then there exist signs $\epsi_1,\epsi_2,\dots\in\{\pm 1\}$ such that for all 
$k\in\N$,
\[\norm{\sum_{i=1}^{2k}\epsi_i x_i}\leq 2.\]
In the Euclidean plane the upper bound $2$ can be replaced by $\sqrt{2}$.
\end{theorem}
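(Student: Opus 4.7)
Both bounds will be proved by induction on $k$, maintaining the invariant $\norm{S_{2k}}\leq M$ and choosing the two new signs $\epsi_{2k-1},\epsi_{2k}$ at each step, where $M=2$ in the general normed case and $M=\sqrt{2}$ in the Euclidean case and $S_n:=\sum_{i=1}^n\epsi_i x_i$. The base case $S_0=0$ is immediate.

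\textbf{Normed case.} Let $v:=S_{2k-2}$; if $v=0$ any choice works since $\norm{\pm x_{2k-1}\pm x_{2k}}\leq 2$. Otherwise let $u:=v/\norm{v}$, a unit vector, and apply Theorem~\ref{oddunitvectors} to the three unit vectors $u,x_{2k-1},x_{2k}$: there exist signs $\rho,\epsi_{2k-1},\epsi_{2k}$ with $\norm{\rho u+\epsi_{2k-1}x_{2k-1}+\epsi_{2k}x_{2k}}\leq 1$, and after negating all three signs if necessary we may assume $\rho=1$. Writing $w:=u+\epsi_{2k-1}x_{2k-1}+\epsi_{2k}x_{2k}$, we have $S_{2k}=(\norm{v}-1)u+w$, so by the triangle inequality $\norm{S_{2k}}\leq\abs{\norm{v}-1}+\norm{w}\leq 1+1=2$, using $\norm{v}\in[0,2]$.

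\textbf{Euclidean case.} The inductive step reduces to the following lemma: for any $v\in\mathbb{R}^2$ with $\norm{v}\leq\sqrt{2}$ and any unit vectors $a,b$, some $\epsi,\delta\in\{\pm 1\}$ achieves $\norm{v+\epsi a+\delta b}\leq\sqrt{2}$. Introduce $c:=a+b$ and $d:=a-b$; since $\norm{a}=\norm{b}=1$, these are orthogonal with $\norm{c}^2+\norm{d}^2=4$, and the four candidate sums $v\pm a\pm b$ equal $v\pm c$ and $v\pm d$. Putting $p:=v\cdot\hat c$ and $q:=v\cdot\hat d$, and assuming without loss of generality $p,q\geq 0$ (the degenerate cases $c=0$ or $d=0$ are trivial), the best achievable squared norm is $\min\bigl((p-\norm{c})^2+q^2,\,p^2+(q-\norm{d})^2\bigr)$. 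Were both expressions larger than $2$, comparing with $p^2+q^2\leq 2$ would force $p<\norm{c}/2$ and $q<\norm{d}/2$, whence $\norm{c}-p>\sqrt{2-q^2}$ and $\norm{d}-q>\sqrt{2-p^2}$; summing and using $t+\sqrt{2-t^2}\geq\sqrt{2}$ on $[0,\sqrt{2}]$ (true at the endpoints, concave between) would give $\norm{c}+\norm{d}>2\sqrt{2}$, contradicting the Cauchy--Schwarz bound $\norm{c}+\norm{d}\leq\sqrt{2(\norm{c}^2+\norm{d}^2)}=2\sqrt{2}$.

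\textbf{Main obstacle.} The normed step is a short reduction to Theorem~\ref{oddunitvectors} once one spots the splitting $S_{2k}=(\norm{v}-1)u+w$. The real work is in the Euclidean lemma: averaging $\norm{v+\epsi a+\delta b}^2$ over the four sign choices only yields $M=2$ (which one gets for free from the normed part anyway), and extracting the extra factor of $\sqrt{2}$ requires simultaneously using the orthogonality of $c$ and $d$ and the tight hypothesis $\norm{v}^2\leq 2$, as in the case analysis above.
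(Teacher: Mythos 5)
Your proof is correct, and while it shares the paper's overall scheme --- induction two vectors at a time, maintaining the invariant $\norm{S_{2k}}\leq M$ via a one-step lemma --- both of your one-step arguments take genuinely different routes. For the normed bound, the paper's Lemma~\ref{onlinelemma} is a self-contained computation: writing $w=\lambda a+\mu b$ with $\lambda,\mu\geq 0$, it shows $\norm{w-a-b}\leq 2$ by first deriving $\abs{\lambda-\mu}\leq 2$ from the norm condition, handling $\lambda+\mu\leq 4$ by the triangle inequality, and for $\lambda+\mu\geq 4$ exhibiting $(\lambda-1)a+(\mu-1)b$ as an explicit nonnegative combination of $\lambda a+\mu b$, $a$ and $b$. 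You instead reduce the step to the $n=3$ case of Theorem~\ref{oddunitvectors} applied to $u=v/\norm{v}$, $x_{2k-1}$, $x_{2k}$; the sign normalization $\rho=1$ is legitimate (negating all three signs preserves the norm), and the splitting $S_{2k}=(\norm{v}-1)u+w$ with $\norm{w}\leq 1$ and $\abs{\norm{v}-1}\leq 1$ closes the induction. This is slicker and more conceptual: the dynamic theorem becomes a corollary of the static zonogon result, and since your argument in fact proves the full statement of Lemma~\ref{onlinelemma}, it would equally serve the positive direction of Theorem~\ref{gametheorem}. For the Euclidean $\sqrt{2}$ bound you prove the same lemma as the paper's Lemma~\ref{euclonlinelemma}, but where the paper argues synthetically --- perpendicular bisector of $p=a+b$, $q=a-b$, the bound $\norm{p-c}=\sec\angle opq\leq\sqrt{2}$, and a circle of radius $\sqrt{2}$ --- you argue algebraically by contradiction: with coordinates $p=v\cdot\hat c$, $q=v\cdot\hat d$ and $\norm{c}^2+\norm{d}^2=4$, the assumed failure forces $\norm{c}-p>\sqrt{2-q^2}$ and $\norm{d}-q>\sqrt{2-p^2}$, and the concavity bound $t+\sqrt{2-t^2}\geq\sqrt{2}$ on $[0,\sqrt{2}]$ together with Cauchy--Schwarz ($\norm{c}+\norm{d}\leq 2\sqrt{2}$) yields the contradiction; I checked these inequalities and the reduction of the four sums $v\pm a\pm b$ to $v\pm c$, $v\pm d$, and all are sound, including the degenerate cases. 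Both Euclidean proofs ultimately exploit $a+b\perp a-b$ with $\norm{a+b}^2+\norm{a-b}^2=4$; yours trades the paper's geometric picture (which makes the extremal configuration visible) for a coordinate argument needing no figure. Your closing heuristic that plain averaging over the four signs ``yields $M=2$'' is loose --- averaging gives $\norm{v}^2+2$ for the mean squared norm, which by itself closes no invariant --- but this is side commentary and does not affect the proof.
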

This result is best possible in the rectilinear plane with unit ball a 
parallelogram --- let $x_{2i-1}=e_1$ and $x_{2i}=e_2$ for all $i\in\N$, where 
$e_1$ and $e_2$ are any adjacent vertices of the unit ball.
See Section~\ref{onlinesection} for a proof of this theorem.
\subsection{Balancing Games}
Theorem~\ref{thmtwo} can be used to analyze the following variation of a 
two-player balancing game of Spencer.
Fix $k\in\N$ and a normed space $X$.
Let the starting position of the game be $p_0=o\in X$.
In round $i$, Player I chooses $k$ unit vectors $x_1,\dots,x_k$ in $X$, and 
then Player II chooses signs $\epsi_1,\dots,\epsi_k\in\{\pm 1\}$.
Then the position is adjusted to $p_i:= p_{i-1}+\sum_{j=1}^k\epsi_jx_j$.

\begin{theorem}\label{gametheorem}
In the above game, Player II can keep the sequence $(p_i)_{i\in\N}$ bounded 
iff $X$ is at most two-dimensional and $k$ is even.
In fact, Player II can force $\norm{p_i}\leq 2$ for all $i\in\N$.
\end{theorem}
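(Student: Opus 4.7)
The plan is to prove the biconditional in two halves; the quantitative bound $\norm{p_i}\leq 2$ will come out of the positive half.

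For the positive direction ($\dim X\leq 2$, $k$ even), I would apply Theorem~\ref{thmtwo} as an online rule. The construction proving Theorem~\ref{thmtwo} in Section~\ref{onlinesection} will, being a dynamic balancing statement, produce each sign $\epsi_i$ using only $x_1,\dots,x_i$, so Player~II can run it against the adversary by concatenating all rounds into one long sequence of unit vectors. After round $i$ the total number of vectors processed is $ki$, which is even, so Theorem~\ref{thmtwo} gives $\norm{p_i}\leq 2$. (The one-dimensional case is trivial: with $k$ even, Player~II can pair off signs to force each round's displacement to zero.)

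For the negative direction I would equip Player~I with two strategies. When $k$ is odd, have Player~I play a single unit vector $v_i$ in all $k$ slots of round~$i$; Player~II's displacement is then $\sigma_i v_i$ with $\sigma_i$ an odd integer in $[-k,k]$, so $\abs{\sigma_i}\geq 1$. Choosing $v_i$ so that $p_{i-1}$ is Birkhoff-orthogonal to $v_i$ (possible whenever $\dim X\geq 2$, and trivially when $p_{i-1}=o$) gives $\norm{p_i}\geq\norm{p_{i-1}}$ together with a displacement of norm at least~$1$. When $\dim X\geq 3$ we may assume $k$ is even (else the previous case applies); have Player~I pick two linearly independent unit vectors $e_1,e_2$ in a $2$-plane Birkhoff-orthogonal to $p_{i-1}$ and pair the remaining $k-2$ vectors off so that they cancel. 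Player~II's displacement is then $\pm e_1\pm e_2\neq o$ in that plane, and again $\norm{p_i}\geq\norm{p_{i-1}}$.

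The main obstacle I anticipate is upgrading the inequality $\norm{p_i}\geq\norm{p_{i-1}}$, which Birkhoff-orthogonality alone provides, to strict growth that is uniformly bounded below on each bounded range of $\norm{p_{i-1}}$; this is what is needed to conclude $\norm{p_i}\to\infty$. In Euclidean geometry $\norm{p_i}^2$ jumps by at least a fixed constant per round, which is immediate. In a general norm I would argue by compactness of the unit sphere of $X$: Player~I chooses $v_i$ (or the Birkhoff-orthogonal $2$-plane) to maximise an appropriate continuous norm-increment functional, and any hypothetical failure of a uniform lower bound would, by passing to a convergent subsequence, produce in the limit a nonzero vector whose norm-increment at a finite $p$ is zero in every sign combination, contradicting the fact that no nonzero displacement direction can simultaneously be Birkhoff-orthogonal to $p$ in both $+$ and $-$ senses with full degeneracy in a finite-dimensional norm.
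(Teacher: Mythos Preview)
Your positive direction is essentially the paper's: the lemmas underlying Theorem~\ref{thmtwo} are two-at-a-time online rules, so applying them $k/2$ times per round is exactly what the paper does (it cites Lemmas~\ref{onlinelemma} and~\ref{euclonlinelemma} directly rather than Theorem~\ref{thmtwo}, but this is the same thing).

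Your negative direction has the right shape but is harder than necessary, and the obstacle you flag is entirely self-inflicted. The paper simply \emph{fixes an auxiliary Euclidean inner product} on $X$ and lets Player~I play Euclidean-orthogonally to $p_{i-1}$. Then by Pythagoras $\norm{p_i}_E^2 = \norm{p_{i-1}}_E^2 + \norm{\text{displacement}}_E^2$, and the displacement has Euclidean norm bounded below by a fixed positive constant, so $\norm{p_i}_E>c\sqrt{i}$; norm equivalence on a finite-dimensional space finishes the job. This sidesteps Birkhoff orthogonality and the compactness argument altogether. Your sketch via Birkhoff orthogonality may be completable, but the last paragraph is not yet a proof: Birkhoff orthogonality of $p_{i-1}$ to the displacement direction gives only $\norm{p_i}\geq\norm{p_{i-1}}$, and extracting a uniform increment from a compactness limit needs more care than ``no nonzero direction can be degenerate in both signs'', since in non-strictly-convex norms one can genuinely have $\norm{p+tv}=\norm{p}$ for all $t$ in an interval.

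A smaller point: in your $k$-even, $\dim X\geq 3$ strategy you write that Player~II's displacement ``is then $\pm e_1\pm e_2$''. That assumes Player~II chooses to cancel the paired vectors, which Player~II need not do. The paper's device is cleaner and avoids this: play $k-1$ copies of $e_1$ and one copy of $e_2$; then the $e_1$-coefficient of the displacement is a sum of $k-1$ signs, hence odd, hence nonzero, and the Euclidean length of the displacement is at least $\sqrt{2}$ regardless of Player~II's choices.
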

The proof is in Section~\ref{onlinesection}.
In \cite{PY} a vector balancing game with a buffer is considered.
Theorem~\ref{gametheorem} readily implies Theorem 4 of \cite{PY} in the 
special case of unit vectors in a normed plane.

\subsection{Fermat-Toricelli points}\label{ftsection}
A point $p$ in a normed space $X$ is a \emph{Fermat-Toricelli point} of 
$x_1,x_2,\dots,x_n\in X$ if $x=p$ minimizes $x\mapsto\sum_{i=1}^n\norm{x_i-x}$.
See \cite{KM} for a survey on the problem of finding such points.
It is well-known that in the Euclidean plane, if $x_1$ is in the convex hull 
of non-collinear $\{x_2,x_3,x_4\}$, then $x_1$ is the (unique) 
Fermat-Toricelli point of $x_1,x_2,x_3,x_4$.
Cieslik \cite{Cieslik2} generalized this result to an arbitrary normed plane 
(where the Fermat-Toricelli point is not necessarily unique).
There is also a generalization by Kupitz and Martini 
\cite[Proposition 6.4]{KM} in another direction.

\begin{thm}
Let $p_0,p_1,\dots,p_{2m+1}$ be distinct points in the Euclidean plane such 
that for any distinct $i$ and $j$ the open angle $\angle p_ip_0p_j$ contains 
a ray opposite some $\overrightarrow{p_0p_k}, 1\leq k\leq 2m+1$.
Then $p_0$ is the unique Fermat-Toricelli point of $\{p_0,p_1,\dots,p_n\}$.
\end{thm}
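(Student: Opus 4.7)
My plan is to translate the Fermat-Toricelli property of $p_0$ into a first-order subgradient condition at $p_0$ and then verify that condition using the combinatorial structure the angular hypothesis forces on the directions $\overrightarrow{p_0p_i}$. Setting $u_i := (p_0-p_i)/\norm{p_0-p_i}$ for $1\leq i\leq 2m+1$, the convex function $f(x):=\sum_{i=0}^{2m+1}\norm{x-p_i}$ has subdifferential $\partial f(p_0) = B+\sum_{i=1}^{2m+1}\{u_i\}$ at $p_0$, where $B$ is the closed Euclidean unit disk (contributed by the nonsmooth term $\norm{x-p_0}$). Hence $p_0$ minimizes $f$ iff $\norm{\sum_i u_i}\leq 1$; strict inequality makes every directional derivative of $f$ at $p_0$ strictly positive, forcing $p_0$ to be the unique minimizer.

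To verify the strict bound, set $v_i:=-u_i$ (a unit vector from $p_0$ toward $p_i$) and list the $v_i$ cyclically on the unit circle. Applying the hypothesis to each of the $2m+1$ pairs of cyclically consecutive directions places at least one antipode $-v_k$ in each of the $2m+1$ open arcs between consecutive $v_i$'s; with only $2m+1$ antipodes available, pigeonhole forces exactly one antipode per arc, so the $4m+2$ points $\{\pm v_i\}$ strictly alternate in type around the circle. To bound $\norm{\sum v_i}$, I would fix an arbitrary unit $y$, rotate so that $y=e_1$, and use $\langle y,v_i\rangle = -\langle y,-v_i\rangle$ together with the alternation to rewrite $2\langle y,\sum v_i\rangle$ as the signed alternating sum of $\cos\theta_j$ over the $4m+2$ cyclically ordered angles. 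This telescopes into a sum of $2m+1$ differences $\cos\theta_{2k+1}-\cos\theta_{2k+2}$; sum-to-product together with the constraint that every consecutive arc has width in $(0,\pi)$ (from openness of the hypothesis) should give the pointwise bound $\langle y,\sum v_i\rangle < 1$, hence $\norm{\sum u_i}=\norm{\sum v_i} < 1$.

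The main obstacle is the last telescoping estimate for general $m$: only the cyclic order of the $4m+2$ points is under direct control, so the sum-to-product bounds must be organized to absorb variation in individual arc widths without relying on any symmetry. A cleaner route bypasses the direct Euclidean computation: invoke Theorem~\ref{oddunitvectors} to obtain signs $\epsi_i$ with $\norm{\sum \epsi_i v_i}\leq 1$, then prove via a swap-based lemma (leveraging the alternation) that under the hypothesis only the constant sign patterns $\epsi\equiv\pm 1$ achieve this bound; this sign-stability argument is presumably what one pushes through when generalizing to the normed-plane setting of Theorem B.
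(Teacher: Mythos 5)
Your setup is sound and, up to a point, matches the paper's machinery: the subdifferential criterion ($p_0$ minimizes iff $\norm{\sum_i u_i}\leq 1$) is exactly Lemma~\ref{ftchar}(2) specialized to the Euclidean plane, and your pigeonhole derivation that the $4m+2$ points $\{\pm v_i\}$ strictly alternate around the circle is precisely the paper's counting argument (from which the paper also reads off that $n$ is odd and $k(i)\equiv i+(n+1)/2\pmod n$). But the decisive step --- bounding $\norm{\sum_i v_i}$ --- is never established, in either of your two routes. Route (a), the telescoping sum-to-product estimate over arbitrary arc widths, is left open by your own admission, and it is not clear it closes with only cyclic-order control. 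Route (b) is logically backwards: Theorem~\ref{oddunitvectors} only guarantees that \emph{some} sign pattern achieves $\norm{\sum_i\epsi_i v_i}\leq 1$; to transfer this to the all-ones pattern you would need your ``sign-stability'' claim that \emph{only} the constant patterns achieve the bound, which you do not prove, which is at least as hard as bounding $\norm{\sum_i v_i}$ directly, and which is not the paper's mechanism (indeed it degenerates in the closed-angle setting of Theorem~\ref{ftresult}, where $v_j=-v_i$ is permitted and is handled separately by choosing $\phi_i=-\phi_j$, creating ties among many sign patterns).

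The idea you are missing is that the alternation you proved already converts the plain sum into an alternating sum of cyclically ordered vectors, which is exactly what Lemma~\ref{zonolemma} bounds. Take the $n$ points of $\{\pm v_i\}$ lying in a fixed open half-plane, in cyclic order $y_1,\dots,y_n$; strict alternation means consecutive $y_k$'s come alternately from $\{v_i\}$ and $\{-v_i\}$, so $\sum_i v_i=\pm\sum_{k=1}^n(-1)^k y_k$, and moreover $y_1,\dots,y_n,-y_1,\dots,-y_n$ occur in this order on the circle. By Lemma~\ref{zonolemma}, i.e.\ the zonogon identity (\ref{zono}) applied to $P=\conv\{\pm y_k\}$, this alternating sum lies in $P$, which is contained in the unit ball; hence $\norm{\sum_i v_i}\leq 1$ with no trigonometry at all. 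This is the paper's proof, and since it is affine rather than metric (run in the dual plane with norming functionals via Lemma~\ref{ftchar}), it yields the general normed-plane Theorem~\ref{ftresult}, of which the stated Euclidean theorem is the special case. One further caveat: your uniqueness claim rests on the unproved strict inequality $\norm{\sum_i u_i}<1$; in the Euclidean case it is cleaner to settle for $\leq 1$ and deduce uniqueness from strict convexity of $x\mapsto\sum_i\norm{x-p_i}$ when the points are not collinear, which the open-angle hypothesis forces for $m\geq 1$.
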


We generalize this result as follows to an arbitrary normed plane.

\begin{theorem}\label{ftresult}
Let $p_0,p_1,\dots,p_n$ be distinct points in a normed plane such that for 
any distinct $i$ and $j$ the closed angle $\angle p_ip_0p_j$ contains a ray 
opposite some $\overrightarrow{p_0p_k}, 1\leq k\leq n$.
Then $p_0$ is a Fermat-Toricelli point of $\{p_0,p_1,\dots,p_n\}$.
\end{theorem}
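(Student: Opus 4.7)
The plan is to translate the Fermat--Toricelli property for $p_0$ into a balancing problem in the dual plane $X^*$, and then to carry out the method behind Theorem~\ref{oddunitvectors}. Set $u_i := (p_i-p_0)/\norm{p_i-p_0}$ for $i=1,\dots,n$ and write $F(u) := \{\psi \in X^* : \norm{\psi}_* = 1,\ \psi(u) = 1\}$ for the set of norming functionals of a unit vector $u \in X$. The subdifferential sum rule, together with the standard facts that $\partial \norm{\cdot - p_0}(p_0) = B_*$ (the closed dual unit ball) and $\partial \norm{\cdot - p_i}(p_0) = -F(u_i)$ for $i \ge 1$, yields
\[ \partial\Bigl(\sum_{i=0}^n \norm{\cdot - p_i}\Bigr)(p_0) \;=\; B_* \;-\; \bigl(F(u_1)+\dots+F(u_n)\bigr). \]
Consequently $p_0$ is a Fermat--Toricelli point if and only if one can choose norming functionals $\psi_i \in F(u_i)$ so that $\norm{\sum_{i=1}^n \psi_i}_* \le 1$. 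This is the dual counterpart of Theorem~\ref{oddunitvectors}, but with all signs prescribed to equal $+1$; the angle hypothesis is what must compensate for the loss of sign freedom.

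To produce the required $\psi_i$'s I would parametrise $\partial B$ (and dually $\partial B_*$) cyclically by angle, sort the indices so that $u_1,\dots,u_n$ appear in counterclockwise order on $\partial B$, and use the angle hypothesis on consecutive pairs to locate a ``balancing'' third direction. Explicitly, whenever $-u_k = \alpha u_i + \beta u_j$ with $\alpha, \beta \ge 0$, every $\psi_k \in F(u_k)$ satisfies the linear relation $\alpha \psi_k(u_i) + \beta \psi_k(u_j) = -1$; chaining such relations around the circle, in the same way that the proof of Theorem~\ref{oddunitvectors} chains the vectors $x_i$, should yield a selection of $\psi_i$'s whose Minkowski sum lies in $B_*$. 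The role played by the odd parity $2k+1$ in Theorem~\ref{oddunitvectors} is here played by the combinatorial rigidity built into the angle hypothesis.

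The main obstacle will be handling degenerate configurations: unit vectors $u_i$ located at vertices of $\partial B$, where $F(u_i)$ is a line segment rather than a point; repeated directions $u_i = u_j$; and antipodal pairs $u_i = -u_j$ --- each of which also degenerates the angle hypothesis. I plan to handle these by a limiting argument: first perturb the $p_i$'s so that all $u_i$'s are pairwise distinct, then approximate the norm by one whose unit ball is smooth and strictly convex, carry out the argument in the regular case, and pass to the limit using continuity of $x \mapsto \sum_i \norm{x - p_i}$ in the data.
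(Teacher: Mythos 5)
Your opening reduction is correct and is exactly the paper's Lemma~\ref{ftchar}(2): $p_0$ is a Fermat--Toricelli point of $\{p_0,\dots,p_n\}$ iff one can select norming functionals $\psi_i$ of $p_i-p_0$ with $\norm{\sum_{i=1}^n\psi_i}\leq 1$. But from that point on the actual proof is missing. The paper does not ``chain'' relations of the form $\alpha\psi_k(u_i)+\beta\psi_k(u_j)=-1$; such relations only pin down the values of $\psi_k$ at two specific points, whereas bounding $\norm{\sum_i\psi_i}$ requires controlling $\sum_i\psi_i(v)$ for \emph{every} unit $v$, and you give no mechanism for that. What the hypothesis actually yields (and what you would need to extract) is a rigid combinatorial structure: after removing antipodal pairs (if $p_0\in[p_i,p_j]$ one takes $\phi_i=-\phi_j$ and cancels them), each \emph{open} consecutive angle $\angle p_ip_0p_{i+1}$ contains exactly one opposite ray $-\overrightarrow{p_0p_{k(i)}}$, and a half-plane counting argument forces $n$ odd with $k(i)\equiv i+(n+1)/2\pmod{n}$. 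Hence the directions interleave around the unit circle as $u_1,-u_{m+1},u_2,-u_{m+2},\dots$, the norming functionals can be chosen so that $\phi_1,-\phi_{m+1},\phi_2,-\phi_{m+2},\dots$ are consecutive on the \emph{dual} unit circle, and then $\sum_i\phi_i$ is precisely an alternating sum of cyclically ordered unit functionals, to which Lemma~\ref{zonolemma} (the zonogon identity, applied in the dual plane) gives norm at most $1$. Your sentence that ``the combinatorial rigidity built into the angle hypothesis'' replaces the odd parity is the right intuition, but none of its content --- oddness of $n$, the antipodal index shift, the interleaved selection, the appeal to the zonogon lemma --- is present, and the relation-chaining sketch does not supply it.

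The fallback via perturbation and smoothing also fails at a specific step: the angle hypothesis is \emph{not} stable under perturbation of the points, so you cannot reduce to pairwise distinct, non-antipodal directions. Concretely, take $n=3$ with directions at $0^\circ$, $90^\circ$, $180^\circ$; the hypothesis holds (with an antipodal pair), but moving the third direction to $180^\circ-\epsi$ destroys it, since the closed angle between $0^\circ$ and $90^\circ$ then contains none of the opposite rays at $180^\circ$, $270^\circ$, $360^\circ-\epsi$. So antipodal configurations are not limits of admissible generic ones, and the degenerate cases must be handled head-on, as the paper does with the cancellation $\phi_i=-\phi_j$ (note also that a repeated direction $u_i=u_j$ forces, via the degenerate closed angle, the existence of an antipodal $u_k$, so it reduces to the same case rather than to a generic one). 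Smoothing the norm is moreover counterproductive in spirit: when $F(u_i)$ is a segment, that very freedom is what allows the functionals to be reordered into consecutive dual positions; discarding it buys you nothing, since in a smooth strictly convex norm the duality map preserves the cyclic order anyway.
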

The proof is in Section~\ref{zonosection}.
Our seemingly weaker hypotheses easily imply that $n$ must be odd.
The proof in \cite{KM} of the Euclidean case uses rotations.
Our proof for any norm shows that it is really an affine result.
The correct affine tool turns out to be the fact that two-dimensional 
centrally symmetric polytopes are zonotopes.

\section{Zonogons}\label{zonosection}
A \emph{zonotope} $P$ in a $d$-dimensional vector space $X$ is a Minkowski 
sum of line segments
\[ P= [x_1,y_1] + [x_2,y_2] + \dots + [x_n,y_n]\]
where $x_1,\dots,x_n,y_1,\dots,y_n\in X$.
It is well-known that any centrally symmetric two-dimensional polytope (or 
polygon) is always a zonotope (or \emph{zonogon}) 
\cite[Example 7.14]{ZiegPolyt}.
In particular, if $x_1,\dots,x_n$ are consecutive edges of a $2n$-gon $P$ 
symmetric around $0$, then
\begin{equation}\label{zono}
P = \sum_{i=1}^n [(x_{i+1}-x_i)/2, (x_i-x_{i+1})/2]
\end{equation}
where we take $x_{n+1}=-x_1$.

\begin{mylemma}\label{zonolemma}
Let $n\in\N$ be odd and let $P$ be a polygon with vertices $\pm x_1$, $\dots,$ $\pm x_n$ 
with $x_1,\dots,x_n$ in this order on the boundary of $P$.
Then
\[ \sum_{i=1}^n (-1)^i x_i 
    = \frac{1}{2}\sum_{i=1}^n (-1)^{i+1}(x_{i+1}-x_i)\in P.\]
\end{mylemma}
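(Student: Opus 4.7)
The plan is to split the claim into two parts: an algebraic identity and a geometric membership statement, each handled by one short argument.

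First I would verify the equality $\sum_{i=1}^n (-1)^i x_i = \frac{1}{2}\sum_{i=1}^n (-1)^{i+1}(x_{i+1}-x_i)$ by direct expansion. Expanding and reindexing the telescoped right-hand side gives
\[
\tfrac{1}{2}\sum_{i=1}^n (-1)^{i+1} x_{i+1} - \tfrac{1}{2}\sum_{i=1}^n (-1)^{i+1} x_i
= \tfrac{1}{2}\sum_{j=2}^{n+1} (-1)^{j} x_j + \tfrac{1}{2}\sum_{i=1}^n (-1)^{i} x_i.
\]
Using the convention $x_{n+1}=-x_1$ together with the hypothesis that $n$ is odd (so $(-1)^{n+1}=1$), the term $(-1)^{n+1}x_{n+1}=-x_1$ exactly compensates the missing $i=1$ term of the first sum, and the identity follows.

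Second, I would deduce the membership in $P$ directly from the zonogon decomposition~\eqref{zono}. Since $P$ is a centrally symmetric $2n$-gon with consecutive vertices $x_1,\dots,x_n,-x_1,\dots,-x_n$, equation~\eqref{zono} expresses it as the Minkowski sum
\[
P = \sum_{i=1}^n \bigl[(x_{i+1}-x_i)/2,\, (x_i-x_{i+1})/2\bigr].
\]
Any choice of one endpoint from each summand therefore yields a point of $P$; taking the endpoint $(-1)^{i+1}(x_{i+1}-x_i)/2$ for each $i$ gives precisely $\tfrac{1}{2}\sum_{i=1}^n (-1)^{i+1}(x_{i+1}-x_i)$, which lies in $P$.

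There is really no serious obstacle here: the lemma is essentially a one-line observation once one recognizes that the alternating signs put the point at a vertex of the zonogon decomposition. The only place where care is needed is the boundary convention $x_{n+1}=-x_1$, where the oddness of $n$ is exactly what is needed for the signs in the telescoping to line up; this is the step I would double-check most carefully.
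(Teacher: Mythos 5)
Your proposal is correct and takes essentially the same route as the paper: the paper's own proof consists of exactly these two steps, verifying the algebraic identity directly (which it dismisses as ``simple to verify'') and deducing membership in $P$ from the zonogon decomposition (\ref{zono}) by selecting one endpoint from each segment in the Minkowski sum. Your write-up merely makes explicit the telescoping calculation and the role of the convention $x_{n+1}=-x_1$ with $n$ odd, both of which the paper leaves to the reader.
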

\begin{proof}
The equation is simple to verify.
That the right-hand side is in $P$ follows from (\ref{zono}).
\end{proof}

Note that Lemma~\ref{zonolemma} does not hold for even $n$.
We can now easily prove Theorem~\ref{oddunitvectors}.

\begin{proof}[Proof of Theorem~\ref{oddunitvectors}]
Fix a line through the origin not containing any $x_i$.
Fix one of the open half planes $H$ bounded by this line.
Then for each $i$, $\delta_ix_i\in H$ for some $\delta_i\in\{\pm 1\}$.
We may renumber $x_1,\dots,x_n$ such that $\delta_1x_1,\dots,\delta_n x_n$ 
occur in this order on $P=\conv\{\pm x_i\}$.
Now take $\epsi_i=(-1)^i\delta_i$ and apply Lemma~\ref{zonolemma}, noting 
that $P$ is contained in the unit ball.
\end{proof}

Recall that the dual of a finite dimensional normed space $X$ is the normed 
space of all linear functionals on $X$ with norm 
$\norm{\phi}=\max\{\phi(u): \norm{u}=1\}$.
A \emph{norming functional} $\phi$ of a non-zero $x\in X$ is a linear 
functional satisfying $\norm{\phi}=1$ and $\phi(x)=\norm{x}$.
Recall that by the separation theorem any non-zero $x\in X$ has a norming 
functional (see e.g.\ \cite{Thompson}).

The following lemma is well-known and easily proved.
See \cite{KM} for the Euclidean case and \cite{DM} for the general case.
We only need the second case of the lemma, but we also state the first case 
for the sake of completeness.
\begin{mylemma}\label{ftchar}
Let $p_0,p_1,\dots,p_n$ be distinct points in a finite-dimensional normed space 
$X$.
\begin{enumerate}
\item Then $p_0$ is a Fermat-Toricelli point of $p_1,\dots,p_n$ iff $p_i-p_0$ 
has a norming functional $\phi_i$ $(1\leq i\leq n)$ such that 
$\sum_{i=1}^n\phi_i=o$,
\item and $p_0$ is a Fermat-Toricelli point of $p_0,p_1,\dots,p_n$ iff 
$p_i-p_0$ has a norming functional $\phi_i$ $(1\leq i\leq n)$ such that 
$\norm{\sum_{i=1}^n\phi_i}\leq 1$.
\end{enumerate}
\end{mylemma}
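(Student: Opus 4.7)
The plan is to treat both parts via subdifferential calculus. Both $f(x):=\sum_{i=1}^n\norm{x-p_i}$ and $g(x):=\norm{x-p_0}+f(x)$ are continuous convex functions on $X$, so $p_0$ minimises the appropriate function iff the zero functional lies in its subdifferential at $p_0$.

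The key input is the description of $\partial\norm{\cdot}(y)$: when $y\neq o$ it coincides with the set $N(y)$ of norming functionals of $y$, while at $y=o$ it equals the dual unit ball $B^*:=\{\phi\in X^*:\norm{\phi}\le 1\}$. Both facts are immediate from the definition of the subdifferential and the separation-theorem existence of norming functionals for nonzero vectors. Translating each $\norm{\cdot-p_i}$ to the origin and invoking the sum rule for subdifferentials (which applies without qualification, since each summand is a continuous convex function on a finite-dimensional space), one finds
\[\partial f(p_0)=\sum_{i=1}^n N(p_0-p_i),\qquad \partial g(p_0)=B^*+\sum_{i=1}^n N(p_0-p_i),\]
where distinctness of the $p_i$ ensures $p_0-p_i\neq o$ for $i\ge 1$.

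Part~(1) now reads: $o\in\partial f(p_0)$ iff there exist $\psi_i\in N(p_0-p_i)$ with $\sum_i\psi_i=o$; setting $\phi_i:=-\psi_i$, a norming functional of $p_i-p_0$, yields the stated criterion. For part~(2), $o\in\partial g(p_0)$ iff there exist $\psi_i\in N(p_0-p_i)$ and $\phi_0\in B^*$ with $\phi_0+\sum_i\psi_i=o$, equivalently $\norm{\sum_i\psi_i}\le 1$; passing once more to $\phi_i:=-\psi_i$ finishes the argument. The main obstacle is the justification of the sum rule together with the characterisation of $\partial\norm{\cdot}$, but both are classical for continuous convex functions in finite dimensions. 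As a sanity check for the ``if'' direction of~(2) one can bypass subdifferentials entirely: summing the inequalities $\phi_i(p_i-x)\le\norm{p_i-x}$ over $i\ge 1$ gives $\sum_{i\ge 1}\norm{x-p_i}\ge\sum_{i\ge 1}\norm{p_0-p_i}+\bigl(\sum_i\phi_i\bigr)(p_0-x)$, and combining this with $\bigl(\sum_i\phi_i\bigr)(p_0-x)\ge-\norm{x-p_0}$ (valid since $\norm{\sum_i\phi_i}\le 1$) closes the inequality; the converse direction is the one that genuinely requires the subdifferential argument.
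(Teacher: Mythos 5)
Your proof is correct, and it is genuinely a proof where the paper offers none: the paper declares Lemma~\ref{ftchar} ``well-known and easily proved'' and cites \cite{KM} for the Euclidean case and \cite{DM} for general norms, so there is no in-paper argument to compare against. Your subdifferential route is exactly the classical one underlying those references: $p_0$ minimises a convex function iff $o$ lies in its subdifferential there; $\partial\norm{\cdot}(y)$ equals the set $N(y)$ of norming functionals for $y\neq o$ and the dual ball $B^*$ at $y=o$; and the Moreau--Rockafellar sum rule applies without qualification since every summand is a finite-valued continuous convex function on a finite-dimensional space. Two small points are worth stating explicitly, both of which your argument gets right implicitly: first, any $\phi$ with $\norm{\phi}\leq 1$ and $\phi(y)=\norm{y}$ for $y\neq o$ automatically satisfies $\norm{\phi}=1$ (evaluate at $y/\norm{y}$), so $\partial\norm{\cdot}(y)$ is precisely $N(y)$ and nothing larger; second, the sign flip $\phi_i:=-\psi_i$ does carry norming functionals of $p_0-p_i$ to norming functionals of $p_i-p_0$ and leaves $\norm{\sum_i\phi_i}=\norm{\sum_i\psi_i}$ unchanged, so the translation between the two formulations is harmless. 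Your closing elementary verification is a nice bonus: it gives a citation-free, subdifferential-free proof of the ``if'' direction of part~(2), which is the only direction the paper actually invokes (in the proof of Theorem~\ref{ftresult} the lemma is used solely to reduce the Fermat--Toricelli claim to exhibiting norming functionals with $\norm{\sum_{i=1}^n\phi_i}\leq 1$), whereas the ``only if'' directions genuinely need the convex-analytic machinery you set up.
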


\begin{proof}[Proof of Theorem~\ref{ftresult}]
By Lemma~\ref{ftchar} it is sufficient to find norming functionals $\phi_i$ 
of $p_i-p_0$ such that $\norm{\sum_{i=1}^n\phi_i}\leq 1$.
We order $p_1,\dots,p_n$ such that $\overrightarrow{p_0p_1},\dots,\overrightarrow{p_0p_n}$ are 
ordered counter-clockwise.
If $p_0\in[p_i,p_j]$ for some $1\leq i<j\leq n$, we may choose 
$\phi_i=-\phi_j$.
We may therefore assume that $p_0\notin[p_i,p_j]$ for all distinct $i,j$.
Thus for any $i$, the \emph{open} angle $\angle p_ip_0p_{i+1}$ contains a ray 
opposite some $\overrightarrow{p_0p_k}$.
We now show that necessarily $n$ is odd and $k\equiv i+(n+1)/2 \pmod{n}$.
Since each open angle contains at least one $-p_k$, each open angle contains 
exactly one such $-p_k$, say $-p_{k(i)}$.
The line through $p_0$ and $p_{k(i)}$ cuts $\{p_1,\dots,p_n\}$ in two open 
half planes:
One half plane contains as many open angles as points $p_i$.
Thus $n$ is odd, and $k(i)\equiv i+(n+1)/2\pmod{n}$.

It is now possible to choose norming functionals $\phi_i$ of each $p_i-p_0$ 
such that $\phi_1,-\phi_{m+1},\phi_2,-\phi_{m+2},\dots$ are consecutive 
vectors on the unit circle in the dual normed plane.
It is therefore sufficient to prove that in any normed plane, if we choose 
unit vectors $x_1,\dots,x_n$ such that $x_1,\dots,x_n,-x_1,\dots,-x_n$ are in 
this order on the unit circle, then $\norm{\sum_{k=1}^n(-1)^kx_k}\leq 1$.
This follows at once from Lemma~\ref{zonolemma}.
\end{proof}

\section{Online Balancing}\label{onlinesection}
\begin{proof}[Proof of Theorem~\ref{gametheorem}]
$\Rightarrow$
We assume that some inner product structure has been fixed on $X$.

If $k$ is odd then in round $i$ Player I chooses the $k$ unit vectors all to 
be the same unit vector, orthogonal to $p_{i-1}$.
Then, independent of the choice of signs by Player II, the Euclidean norm of 
$p_i$ grows $> c\sqrt{i}$.

If $k$ is even and $X$ is at least three-dimensional, Player I finds unit 
vectors $e_1$ and $e_2$ such that $e_1,e_2,p_{i-1}$ are mutually orthogonal,
 then in round $i$ takes $e_1$ for the first $k-1$ unit vectors, and $e_2$ 
for the last unit vector.
Again the Euclidean norm of $p_i$ will grow  $>c\sqrt{i}$.

$\Leftarrow$ follows immediately from Lemmas~\ref{onlinelemma} and 
\ref{euclonlinelemma} below.
\end{proof}

\begin{proof}[Proof of Theorem~\ref{thmtwo}]
follows immediately from the following two lemmas.
\end{proof}

\begin{mylemma}\label{onlinelemma}
Let $w,a,b$ be vectors in a normed plane such that $\norm{w}\leq 2$, 
$\norm{a}=\norm{b}=1$.
Then there exist signs $\delta,\epsi\in\{\pm 1\}$ such that 
$\norm{w+\delta a+\epsi b}\leq 2$.
\end{mylemma}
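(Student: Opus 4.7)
The plan is to deduce the lemma from Theorem~\ref{oddunitvectors} applied to three unit vectors, after first writing $w$ as a sum of two unit vectors.

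First I would establish the following decomposition: whenever $\|w\|\leq 2$ in a normed plane, there exist unit vectors $y_1,y_2$ with $w = y_1 + y_2$. Equivalently, the unit sphere $S$ and its translate $w + S$ must intersect. The case $w = 0$ is trivial (take any unit $y_1$ and set $y_2 = -y_1$). For $0 < \|w\| \leq 2$, the condition forces $B \cap (w+B) \neq \emptyset$ (since $B - B = 2B$ by central symmetry of the unit ball $B$); since $B$ and $w+B$ are then distinct translates of the same centrally symmetric convex body, neither can be contained in the other, and therefore their boundary curves $S$ and $w+S$ must intersect, producing the desired unit vectors.

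Given such $y_1$ and $y_2$, I would next apply Theorem~\ref{oddunitvectors} with $k = 1$ to the three unit vectors $y_1, a, b$: this yields signs $\sigma_1, \sigma_2, \sigma_3 \in \{\pm 1\}$ with $\|\sigma_1 y_1 + \sigma_2 a + \sigma_3 b\| \leq 1$. Setting $\delta := \sigma_1\sigma_2$ and $\epsi := \sigma_1\sigma_3$, and using $\sigma_1^2 = 1$, one then has
\[w + \delta a + \epsi b \;=\; y_2 \;+\; \sigma_1\bigl(\sigma_1 y_1 + \sigma_2 a + \sigma_3 b\bigr),\]
so the triangle inequality gives $\|w + \delta a + \epsi b\| \leq \|y_2\| + \|\sigma_1 y_1 + \sigma_2 a + \sigma_3 b\| \leq 1 + 1 = 2$, which is the required bound.

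The only real obstacle is the decomposition step, i.e., verifying the geometric fact $2B \subseteq S + S$ in a normed plane; once this is in hand, the rest is a direct consequence of Theorem~\ref{oddunitvectors} and a one-line triangle-inequality computation.
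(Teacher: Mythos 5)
Your argument is correct, but it is genuinely different from the paper's. The paper proves the lemma by a self-contained computation: it writes $w=\lambda a+\mu b$ in the basis $\{a,b\}$ (the case $a=\pm b$ being trivial), assumes $\lambda,\mu\geq 0$, fixes the signs $\delta=\epsi=-1$, derives $\abs{\lambda-\mu}\leq 2$ from $\norm{w}\leq 2$, and then splits into the cases $\lambda+\mu\leq 4$ (where $\norm{(\lambda-1)a+(\mu-1)b}\leq\abs{\lambda-1}+\abs{\mu-1}\leq 2$) and $\lambda+\mu\geq 4$ (where $(\lambda-1)a+(\mu-1)b$ is exhibited as a convex-type nonnegative combination of $w$, $a$, $b$ and the triangle inequality closes the estimate). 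You instead reduce the lemma to Theorem~\ref{oddunitvectors}: you split $w=y_1+y_2$ into two unit vectors, balance the three unit vectors $y_1,a,b$ to norm at most $1$, and absorb $y_2$ by the triangle inequality; your sign bookkeeping ($\delta=\sigma_1\sigma_2$, $\epsi=\sigma_1\sigma_3$) is valid. The one nontrivial ingredient you need, that every $w$ with $\norm{w}\leq 2$ in a normed plane is a sum of two unit vectors (equivalently $2B\subseteq S+S$), is correctly handled: $B\cap(w+B)\neq\emptyset$ follows from $w\in 2B=B+B$, neither of two distinct translates of a compact convex body can contain the other, and two intersecting planar convex bodies with neither containing the other have intersecting boundaries (this last step silently uses connectedness of the boundary circle, which is where two-dimensionality enters your proof; in the paper it enters through the basis $\{a,b\}$). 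What each approach buys: the paper's proof is elementary and purely affine, using nothing beyond the triangle inequality, and keeps Section~\ref{onlinesection} independent of Section~\ref{zonosection}; your proof is conceptually cleaner, explains the constant $2$ as $1+1$ (the invariant $\norm{p_i}\leq 2$ persists because the current position splits into two unit vectors and any three unit vectors balance to norm $\leq 1$), and unifies the static and dynamic theorems, at the cost of a topological fact about convex bodies and a dependence on Theorem~\ref{oddunitvectors} (which is harmless here, since that theorem is proved earlier and independently).
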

\begin{proof}
If $a=\pm b$, then the lemma is trivial.
So assume that $a$ and $b$ are linearly independent.
Let $w=\lambda a+\mu b$.
Without loss of generality we assume that $\lambda,\mu\geq 0$, and show that 
$\norm{w-a-b}\leq 2$.

If $\lambda=0$, then $0\leq\mu\leq 2$ and 
$\norm{(\lambda-1)a+(\mu-1)b}\leq \norm{a}+\norm{(\mu-1)b}\leq 2$.
So we may assume that $\lambda>0$, and similarly, $\mu>0$.
Then we can write $a=-(\mu/\lambda)b+(1/\lambda)w$.
Taking norms we obtain $1=\norm{a}\leq \mu/\lambda+2/\lambda$, and therefore, 
$\lambda-\mu\leq 2$.
Similarly, $\mu-\lambda\leq 2$.
So we already have $\abs{(\lambda-1)-(\mu-1)}\leq 2$.
If furthermore $\lambda+\mu\leq 4$, we also obtain 
$\abs{(\lambda-1)+(\mu-1)}\leq 2$, giving 
$\norm{(\lambda-1)a+(\mu-1)b}\leq \abs{\lambda-1}+\abs{\mu-1}\leq2$.

In the remaining case $\lambda+\mu\geq 4$ we write $(\lambda-1)a+(\mu-1)b$ as 
a non-negative linear combination
\[ (\lambda-1)a+(\mu-1)b = 
    \frac{\lambda+\mu-4}{\lambda+\mu-2}(\lambda a+\mu b) 
  + \frac{2+\lambda-\mu}{\lambda+\mu-2}a
  + \frac{2-\lambda+\mu}{\lambda+\mu-2}b,\]
and apply the triangle inequality:
\[\norm{(\lambda-1)a+(\mu-1)b} \leq 
    2 \frac{\lambda+\mu-4}{\lambda+\mu-2}
    + \frac{2+\lambda-\mu}{\lambda+\mu-2}
    + \frac{2-\lambda+\mu}{\lambda+\mu-2}=2.\]
\end{proof}

\begin{mylemma}\label{euclonlinelemma}
Let $w,a,b$ be vectors in the Euclidean plane such that 
$\norm{w}\leq \sqrt{2}$, $\norm{a}=\norm{b}=1$.
Then there exist signs $\delta,\epsi\in\{\pm 1\}$ such that 
$\norm{w+\delta a+\epsi b}\leq \sqrt{2}$.
\end{mylemma}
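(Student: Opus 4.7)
The plan is to diagonalize the pair $(a,b)$ by setting $u = a+b$ and $v = a-b$. A quick inner-product computation shows $u \perp v$ and $\norm{u}^2 + \norm{v}^2 = 2\norm{a}^2 + 2\norm{b}^2 = 4$, while the four vectors $w + \delta a + \epsi b$ coincide, in some order, with $w \pm u$ and $w \pm v$ (depending on whether $\delta = \epsi$ or $\delta = -\epsi$). So it suffices to show that at least one of these four points lies in the closed Euclidean disk of radius $\sqrt{2}$ about the origin.

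The degenerate case $a = \pm b$, in which one of $u,v$ vanishes, is immediate: one picks canceling signs and is left with $w$, whose norm is already at most $\sqrt{2}$. In the main case I would introduce orthonormal coordinates along $(\hat u, \hat v)$ and write $w = \alpha\hat u + \beta\hat v$ with $\alpha,\beta \geq 0$ (without loss of generality, after replacing $u$ by $-u$ or $v$ by $-v$), and set $p = \norm{u}$, $q = \norm{v}$, so that $p^2 + q^2 = 4$ and $\alpha^2 + \beta^2 \leq 2$. The four squared norms $\norm{w \pm u}^2$ and $\norm{w \pm v}^2$ then have minima $(p-\alpha)^2 + \beta^2$ and $\alpha^2 + (q-\beta)^2$, and I would argue by contradiction that both of these exceed $2$.

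Subtracting the bound $\alpha^2 + \beta^2 \leq 2$ from each hypothesis gives $p(p-2\alpha) > 0$ and $q(q-2\beta) > 0$, hence $p,q > 0$ and $\alpha < p/2$, $\beta < q/2$. Adding the two hypotheses and using $p^2 + q^2 = 4$ collapses the algebra to $\alpha^2 + \beta^2 > \alpha p + \beta q$. On the other hand, the box bounds $\alpha \leq p/2$, $\beta \leq q/2$ give $\alpha p \geq 2\alpha^2$ and $\beta q \geq 2\beta^2$, so $\alpha p + \beta q \geq 2(\alpha^2 + \beta^2)$, and combining yields $\alpha^2 + \beta^2 > 2(\alpha^2 + \beta^2)$, the desired contradiction. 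Geometrically, the summed inequality says $w$ lies strictly outside the unit disk centered at $c = (p/2, q/2)$, while the box bounds confine $w$ to the rectangle $[0,p/2]\times[0,q/2]$, which sits inside that disk because $\norm{c} = 1$.

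The only place Euclidean structure really enters is the parallelogram identity $\norm{u}^2 + \norm{v}^2 = 2\norm{a}^2 + 2\norm{b}^2$ at the very beginning; that is exactly what buys the improved constant $\sqrt{2}$ in place of the bound $2$ of Lemma~\ref{onlinelemma}, and spotting this substitution is the main (modest) obstacle. Once it is in place, the remainder is elementary analytic geometry.
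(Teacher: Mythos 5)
Your proof is correct, and it opens exactly as the paper's own proof does: both pass to $u=a+b$, $v=a-b$ (the paper calls them $p$ and $q$), observe $u\perp v$, note that the four candidate points are $w\pm u$ and $w\pm v$, and exploit the parallelogram identity $\norm{u}^2+\norm{v}^2=4$ as the specifically Euclidean ingredient. Where you genuinely diverge is the endgame. The paper argues synthetically: assuming without loss that $\norm{p}\geq\norm{q}$ and $w$ lies in the angle $\angle poq$, it observes that $\min(\norm{w-p},\norm{w-q})$ is maximized on the perpendicular bisector $L$ of $pq$, and then bounds the distances $\norm{p-c}$ and $\norm{p-d}$ at the two relevant points of $L$ via a figure, the estimate $\sec\angle opq\leq\sqrt{2}$, and a right-angle computation using $\norm{m}=1$, $\norm{d}=\sqrt{2}$. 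You instead work in coordinates and argue by contradiction: after the legitimate reduction to $\alpha,\beta\geq 0$ (the four candidate points are invariant under $u\mapsto -u$, $v\mapsto -v$, as you note), if both minima $(p-\alpha)^2+\beta^2$ and $\alpha^2+(q-\beta)^2$ exceeded $2$, then subtracting $\alpha^2+\beta^2\leq 2$ gives $\alpha<p/2$, $\beta<q/2$, while summing and using $p^2+q^2=4$ gives $\alpha^2+\beta^2>\alpha p+\beta q\geq 2(\alpha^2+\beta^2)$, which is absurd; I have checked each of these steps and they are sound, including the identification of the sign-minima (valid since $\alpha,\beta,p,q\geq 0$) and the degenerate case $a=\pm b$. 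What each route buys: yours is fully self-contained and verifiable line by line, dispensing with the figure and with the plausible-but-unproved geometric claims the paper leans on (that the constrained maximum of $\min(\norm{w-p},\norm{w-q})$ occurs on $L$ at $c$ or $d$, that $c$ lies between $o$ and $p$, that $\angle omd\geq 90^\circ$); the paper's version, in exchange, makes the extremal configurations visible and shows exactly where the bound $\sqrt{2}$ becomes tight. Your closing geometric gloss --- $w$ forced strictly outside the unit disk centered at $c=(p/2,q/2)$ yet confined to the rectangle $[0,p/2]\times[0,q/2]$, which lies in that disk since $\norm{c}=1$ and $p,q\leq 2$ --- is also correct and is in effect a coordinate-free restatement of your contradiction.
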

\begin{proof}
Note that $a+b\perp a-b$.
Write $p=a+b$, $q=a-b$.
Let $m$ be the midpoint of $pq$, and $L$ the perpendicular bisector of $pq$.
Assume without loss that $\norm{p}\geq\norm{q}$ and that $w$ is inside 
$\angle poq$.
We now show that $\norm{w-p}\leq\sqrt{2}$ or $\norm{w-q}\leq\sqrt{2}$.
Note that as $w$ varies, $\min(\norm{w-p},\norm{w-q})$ is maximized on $L$.
Let $L$ and $op$ intersect in $c$ (between $o$ and $p$), and $L$ and the circle with centre $o$ and radius $\sqrt{2}$ in $d$ (inside $\angle poq$).
See Figure~\ref{fig}.
\begin{figure}
\begin{center}
   \includegraphics{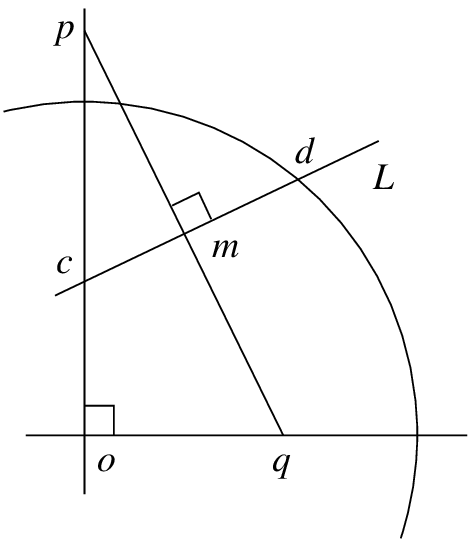}
\end{center}
\caption{}\label{fig}
\end{figure}
Then clearly
\[\max_{\norm{w}\leq\sqrt{2}}\min(\norm{w-p},\norm{w-q}) = \max(\norm{p-c},\norm{p-d}),\]
and we have to show $\norm{p-c}\leq\sqrt{2}$ and $\norm{p-d}\leq\sqrt{2}$.
Since $\norm{p}\geq\norm{q}$, we have $\angle opq\leq 45^\circ$ and $\norm{p-c}=\sec\angle opq\leq\sqrt{2}$.
Since $c$ is between $o$ and $p$, we have $\angle omd\geq 90^\circ$, hence $\norm{m-d}^2\leq \norm{d}^2-\norm{m}^2=2-1$, and 
$\norm{p-d}^2=\norm{p-m}^2+\norm{m-d}^2\leq 1+1$.
\end{proof}

\section{Concluding remarks}
It would be interesting to find higher dimensional generalizations of our results and methods.
We only make the following remarks.

Perhaps there is an analogue of Theorem~\ref{oddunitvectors} with an upper bound of $d-1$ for $n$ unit vectors in a $d$-dimensional normed space where $n\not\equiv d\pmod{2}$.
This would be best possible, as the standard unit vectors in the $d$-dimensional space with the $L_1$ norm show.

Regarding Theorem~\ref{thmtwo}, it is not even clear what the best upper bound in Theorem~\ref{bgtwo} should be.
B\'ar\'any and Grinberg \cite{BG} claim that they can replace $2d$ by $2d-1$.
On the other hand, the upper bound cannot be smaller than $d$, as the $d$-dimensional $L_1$ space shows \cite{BG}.
As the negative part of Theorem~\ref{gametheorem} and the results of \cite{PY} show, an online method would have to have a (sufficiently large) buffer where Player II can put vectors supplied by Player I and take them out in any order.

We finally remark that a naive generalization of Theorem~\ref{ftresult} is not possible, even in Euclidean $3$-space.
For example, using Lemma~\ref{ftchar} it can be shown that for a regular simplex with vertices $x_i \; (i=1,\dots,4)$ there exists a point $x_5$ in the interior of the simplex such that $x_5$ is not a Fermat-Toricelli point of $\{x_1,\dots,x_5\}$ --- we may take any $x_5$ sufficiently near a vertex.
 
\section*{acknowledgments}
We thank the referee for suggestions on improving the paper.


\begin{thebibliography}{19}

\bibitem{BG}
I.~B{\'a}r{\'a}ny and V.~S. Grinberg, \emph{On some combinatorial questions in
  finite-dimensional spaces}, Linear Algebra Appl. \textbf{41} (1981), 1--9.

\bibitem{Cieslik2}
D.~Cieslik, \emph{Steiner minimal trees}, Nonconvex optimization and its
  applications, vol.~23, Kluwer, Dordrecht, 1998.

\bibitem{DM}
R.~Durier and C.~Michelot, \emph{Geometrical properties of the {Fermat}-{Weber}
  problem}, Europ. J. Oper. Res. \textbf{20} (1985), 332--343.

\bibitem{KM}
Y.~S. Kupitz and H.~Martini, \emph{Geometric aspects of the generalized
  {Fermat}-{Toricelli} problem}, Intuitive Geometry, Bolyai Society
  Mathematical Studies, vol.~6, 1997, pp.~55--127.

\bibitem{PY}
H.~Peng and C.~H. Yan, \emph{Balancing game with a buffer}, Adv. Appl. Math.
  \textbf{21} (1998), 193--204.

\bibitem{Sp1}
J.~Spencer, \emph{Balancing games}, J. Comb. Th. Ser B \textbf{23} (1977),
  68--74.

\bibitem{Thompson}
A.~C. Thompson, \emph{Minkowski {Geometry}}, Encyclopedia of {Mathematics} and
  its {Applications} 63, Cambridge University Press, 1996.

\bibitem{ZiegPolyt}
G.~M. Ziegler, \emph{Lectures on Polytopes}, Graduate Texts in Mathematics 152, Springer-Verlag, New York, 1995.

\end{thebibliography}
\end{document}